\newtheorem{thrm}{Theorem}
\begin{document}
%%-----------------------------
%% the top matter
%%-----------------------------
\title[Implicit method using nodal radial basis functions]{A fully implicit method using nodal radial basis functions to solve the linear advection equation}
\thanks{One of us (PAG) would like to thank Carter Ball and Carlos Cabrera with setting up some simulations used in this paper. }% At most 5 thanks
\author[P.-A. Gourdain, M. B. Adams, M. Evans, H. R. Hasson, et al.]{P.-A. Gourdain, M. B. Adams, M. Evans, H. R. Hasson, J. R. Young, I. West-Abdallah}\address{Physics and Astronomy Department, University of Rochester, Rochester, New York, 14627, USA}
\date{23/03/2022}
\begin{abstract} Radial basis functions are typically used when discretization sche-mes require inhomogeneous node distributions. While spawning from a desire to interpolate functions on a random set of nodes, they have found successful applications in solving many types of differential equations. However, the weights of the interpolated solution, used in the linear superposition of basis functions to interpolate the solution, and the actual value of the solution are completely different. In fact, these weights mix the value of the solution with the geometrical location of the nodes used to discretize the equation. In this paper, we used nodal radial basis functions, which are interpolants of the impulse function at each node inside the domain. This transformation allows to solve  a linear hyperbolic partial differential equation using series expansion rather than the explicit computation of a matrix inverse. This transformation effectively yields an implicit solver which only requires the multiplication of vectors with matrices. Because the solver requires neither matrix inverse nor matrix-matrix products, this approach is numerically more stable and  reduces the error by at least two orders of magnitude, compared to other solvers using radial basis functions directly.  Further, boundary conditions are integrated directly inside the solver, at no extra cost.  The method is naturally conservative, keeping the error virtually constant throughout the computation. 
 \end{abstract}

\subjclass{65D12, 65F05, 35F50, 35F46, 65D05, 35L65}
\keywords{radial basis functions, implicit scheme, hyperbolic equations}
\maketitle
%%-----------------------------
%% your text
%%-----------------------------
\section{Introduction}

Radial basis function  interpolation  is one of the few methods that can  approximate 
across a \emph{d}-dimensional space a function only defined on a
randomly distributed set of \emph{n} nodes
$x_{1},x_{2},...,x_{n}\ \in \mathbb{R}^{d}$ \cite{hardy,franke}.
While initially used for
interpolation problems, this method can be used to defined surfaces in
multiple dimensions \cite{carr} or solve partial
differential
equations \cite{kansa90a,kansa90b,fass99,fass09}.
One important characteristic of radial basis functions is their
definition using the relative positions of nodes, obtained from the
Euclidian norm \textbar\textbar.\textbar\textbar{}\textsubscript{d},
rather than their absolute location in space. For
$x \in \mathbb{R}^{d}$ we define the radial basis function (RBF) at
every node $x_{j}$ as $\Phi_{x_{j}}(x) = \phi(||x - x_{j}||_{d})$,
which we will also write as $\Phi(x - x_{j})$. Typically, $\Phi$ is
normalized, i.e. $\Phi(0) = 1$. New functions can be generated by
scaling of the modal function $\phi$ by a factor $\alpha$, giving
the standard definition of the radial basis function
$\Phi_{\alpha,x_{j}}$ as
\begin{equation*}\label{eq:1}
\def\labelenumi{(\arabic{enumi})}
\Phi_{\alpha,x_{j}}(x) = \phi(||x - x_{j}||_{d}/\alpha),
\end{equation*}
which we will also write as $\Phi_{\alpha}(x - x_{j})$. We use the
width parameter $\alpha$ rather than the usual shape factor (i.e.
1/$\alpha$) in this paper because we will compare the radial basis
function spread to the domain size throughout this paper.

A continuous function \emph{f} can be approximated on a finite set of
nodes $U = \{ x_{1},x_{2},...,x_{n}\}$ using radial basis functions by
computing a set of weights $\omega_{j}$ defined by
\begin{equation*}
\forall x_{i} \in U,f\left( x_{i} \right) = \sum_{j}^{}\omega_{j}\Phi_{\alpha}\left( x_{i} - x_{j} \right).
\end{equation*}
The weights $\omega_{j}$ can be found by solving the linear system

\[\begin{bmatrix}
\Phi_{\alpha}(x_{1} - x_{1}) & & \text{...} & & \Phi_{\alpha}(x_{1} - x_{n}) \\
\text{...} & & \text{...} & & \text{...} \\
\Phi_{\alpha}(x_{n} - x_{1}) & & \text{...} & & \Phi_{\alpha}(x_{n} - x_{n}) \\
\end{bmatrix}\begin{bmatrix}
\omega_{1} \\
\text{...} \\
\omega_{n} \\
\end{bmatrix} = \begin{bmatrix}
f(x_{1}) \\
\text{...} \\
f(x_{n}) \\
\end{bmatrix}\]

written in compact form as
$\lbrack\Phi_{\alpha}\rbrack\lbrack\omega\rbrack = \lbrack f\rbrack.$
To solve this system, we need to find the inverse of the matrix
$\lbrack\Phi_{\alpha}\rbrack$ and compute the weights $\omega_{j}$
using
\begin{equation}\label{eq:3}
\lbrack\omega\rbrack = \lbrack\Phi_{\alpha}\rbrack^{- 1}\lbrack f\rbrack.
\end{equation}
The radial basis function $\Phi$ is said to be definite positive when
$\lbrack\Phi\rbrack$ is invertible, supposing \emph{U} does not have any redundant nodes (i.e. \emph{x\textsubscript{i}}=\emph{x\textsubscript{j}} while $i\neq j$). Once the weights are known, the function $f$ can be interpolated between nodes using the function $\bar{\overline{f}}$
defined by
\begin{equation}\label{eq:4}
\overline{f}(x) = \sum_{j}^{}\omega_{j}\Phi_{\alpha}\left( x - x_{j} \right).
\end{equation}
 
Since radial basis functions can interpolate any smooth function using a linear combination of differentiable functions, it quickly spawned differential equation solvers for
elliptic \cite{liu}, hyperbolic \cite{xliu}, parabolic \cite{zamo} or shallow-water \cite{flyer09} equations using different approaches such as spectral
\cite{shiva} or backward substitution \cite{reut14,zhang20} methods. Even differential equations with fractional operators \cite{mirz,lin22}, curvilinear coordinates \cite{Wright10} or complex boundary conditions \cite{karageo21} can be solved using this technique. The ease in defining spacial and temporal derivatives is probably the main reason this method has found universal applications. 

However, one major issue raised by Eq. (\ref{eq:4}) is evident.  The interpolated function $\overline{f}$ is now defined
in term of the weights $\omega_{j}$. This becomes an issue when
solving differential equations using radial basis functions. For
instance, the value of the function might be required to compute the
value of another function or match a set of boundary conditions. Further
if we want to interpolate a new function g, then all the weights
$\omega_{j}$ must be computed again.

In the rest of the paper, we first define a set of nodal radial basis functions (NRBF) that interpolates the impulse function. These functions form an orthonormal basis for the inner product of interpolated function on \emph{U}. We then summarize the basic properties of
NRBF formed using RBF with compact support, then we present the theory
behind our linear advection equation solver and compare it to standard
solvers. Finally, we conclude by showing how NRBF can be trivially
extended to solve  the advection equation with a velocity which varies across the domain . It is important to
note that the method is completely independent of the number of spatial
dimensions by construction. As a result, we will not look at
multidimensional cases in this paper. While we do not claim that the
method will perform well in a larger number of dimensions, the solver
proposed is clearly dimension agnostic.

\section{Definition of nodal radial basis functions\label{definition}}
The solution to avoid this problem is relatively straightforward. Rather than using radial basis functions directly, which have well defined, yet poorly matched, values at the node points, we can used them to interpolate the impulse functions $\delta(x-x_i)$ first. Once these new functions are defined, interpolation is trivial since the weights for each interpolant $\Psi_{x_i}$ is $f(x_i)$. To construct them, we can rewrite Eq. (\ref{eq:3}) as
\begin{equation}\label{eq:5}
\lbrack\omega\rbrack = \lbrack\Phi_{\alpha}\rbrack^{- 1}\sum_{j}^{}f(x_{j})\lbrack\delta_{j}\rbrack.
\end{equation}
Here the vectors
$\left\lbrack \delta_{j} \right\rbrack = \lbrack\delta(x_{i} - x_{j})\rbrack^{T}$
are defined using the impulse function
$$\forall x,y \in \mathbb{R,}\,\delta(x - y) = \left\{ \begin{matrix}
1\ \text{if}\ x = y \\
0\ \text{if}\ x \neq y \\
\end{matrix} \right.\ $$
This decomposition allows to create a series of interpolants on \emph{U}
for each translated impulse function $\delta(x - x_{j})$
\begin{equation}\label{eq:6}
\forall x_{i},x_{j} \in U,\,\Psi_{\alpha,x_{j}}\left( x_{i} \right) = \delta\left( x_{i} - x_{j} \right).
\end{equation}
They can be expressed in term of our radial basis functions as
\begin{equation}\label{eq:7}
\Psi_{{\alpha,x}_{j}}(x) = \sum_{i}^{}\Omega_{\text{ij}}\Phi_{\alpha}(x_{i} - x).
\end{equation}
and their weights $\Omega_{\text{ij}}$ can be computed given in Eq. (\ref{eq:3})
\begin{equation*}
\lbrack\Omega_{j}\rbrack = \lbrack\Phi_{\alpha}\rbrack^{- 1}\lbrack\delta_{j}\rbrack
\end{equation*}
It is interesting to note that these weights simply are the elements of
the matrix $\lbrack\Phi_{\alpha}\rbrack^{- 1}$.

\begin{thrm}
 \emph{The interpolant of $f$ formed using $\Phi_{x_i}$, i.e. $\overline{f}(x)= \sum_{j}^{}\omega_{j}\Phi_{\alpha}\left( x_{i} - x_{j} \right)$, and the interpolant of $f$ formed using $\Psi_{x_i}$, i.e. $\overline{\overline{f}}(x)=\sum_{i}f(x_{i})\Psi_{\alpha,x_{i}}(x)$, are identical.}\\
 \end{thrm}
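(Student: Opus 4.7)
The plan is to expand $\overline{\overline{f}}$ using the explicit formula for $\Psi_{\alpha,x_j}$ given in Eq. (\ref{eq:7}), swap the order of summation, and recognize the inner sum as a matrix-vector product that yields exactly the RBF weights $\omega_j$. The radial (hence symmetric) nature of $\Phi_\alpha$ will then reconcile the two ways the argument of $\Phi_\alpha$ appears in the two interpolants.

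Concretely, I would first note that, since $[\delta_j]$ is the $j$-th standard basis vector, the relation $[\Omega_j]=[\Phi_\alpha]^{-1}[\delta_j]$ shows that $\Omega_{ij}$ is the $(i,j)$-entry of $[\Phi_\alpha]^{-1}$. Substituting Eq. (\ref{eq:7}) into the definition of $\overline{\overline{f}}$ gives
\begin{equation*}
\overline{\overline{f}}(x)=\sum_{j} f(x_{j})\,\Psi_{\alpha,x_{j}}(x)=\sum_{j}f(x_{j})\sum_{i}\Omega_{ij}\,\Phi_{\alpha}(x_{i}-x).
\end{equation*}
Swapping the summation order yields
\begin{equation*}
\overline{\overline{f}}(x)=\sum_{i}\Bigl(\sum_{j}\Omega_{ij}\,f(x_{j})\Bigr)\Phi_{\alpha}(x_{i}-x).
\end{equation*}

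Next, I would identify the bracketed quantity. Because $\Omega_{ij}=([\Phi_\alpha]^{-1})_{ij}$, the inner sum is precisely the $i$-th component of $[\Phi_\alpha]^{-1}[f]$, which by Eq. (\ref{eq:3}) equals $\omega_{i}$. Hence
\begin{equation*}
\overline{\overline{f}}(x)=\sum_{i}\omega_{i}\,\Phi_{\alpha}(x_{i}-x).
\end{equation*}

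Finally, to match this with $\overline{f}(x)=\sum_{j}\omega_{j}\Phi_{\alpha}(x-x_{j})$, I would invoke the defining property that $\Phi_{\alpha}$ depends only on $\|x-x_{j}\|_{d}/\alpha$, so $\Phi_{\alpha}(x_{i}-x)=\Phi_{\alpha}(x-x_{i})$; after relabeling the dummy index, the two sums coincide. The argument is almost entirely bookkeeping; the only conceptual step is recognizing that the radial symmetry of $\Phi_{\alpha}$ is what allows the two conventions ($x-x_j$ versus $x_j-x$) to be identified. Everything else is a straightforward rearrangement of a matrix-vector product, so I do not anticipate a genuine obstacle, only care with indices.
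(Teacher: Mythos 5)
Your proposal is correct and is essentially the paper's own argument run in the opposite direction: the paper starts from $\overline{f}$, writes $\omega_j$ in terms of $f(x_i)$ and $\Omega_{ij}$, swaps the sums, and uses the symmetry of $\Omega$ and of $\Phi_\alpha$ to recognize $\Psi_{\alpha,x_i}$, whereas you start from $\overline{\overline{f}}$, swap the sums, and recognize the inner sum as the matrix--vector product $[\Phi_\alpha]^{-1}[f]=[\omega]$. The only cosmetic difference is that your identification of $\sum_j\Omega_{ij}f(x_j)$ with $\omega_i$ absorbs the symmetry $\Omega_{ij}=\Omega_{ji}$ that the paper invokes explicitly; both proofs otherwise rest on the same bookkeeping and on $\Phi_\alpha(x_i-x)=\Phi_\alpha(x-x_i)$.
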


\begin{proof}
We can write
$\omega_{j} = \sum_{i}^{}f(x_{i})\Omega_{\text{ij}}$ using Eq. (\ref{eq:5}) and
Eq. (\ref{eq:4}) becomes
$$\bar{f}(x) = \sum_{j}^{}{\sum_{i}^{}{f(x_{i})\Omega_{\text{ij}}\Phi_{\alpha}(x - x_{j})}},$$
The sum operators can be easily permuted to give
$$\bar{f}(x) = \sum_{i}^{}f(x_{i})\sum_{j}^{}\Omega_{\text{ij}}\Phi_{\alpha}(x - x_{j}).$$
Since $\Omega_{\text{ij}} = \Omega_{\text{ji}}$ and
$\Phi_{\alpha}(x - x_{j}) = \Phi_{\alpha}(x_{j} - x)$ since
$|\left| x - x_{j} \right||_{d} = ||x_{j} - x||_{d}$, we can rewrite the above equation as
\begin{equation}\label{eq:9}
{\overline{f}}(x) = \sum_{i}f(x_{i})\Psi_{\alpha,x_{i}}(x).
\end{equation}
So $\overline{\overline{f}}\equiv\overline{f}$.
\end{proof}
While the functions $\Psi_{\alpha,x_{i}}$ are constructed using radial
basis functions, they are fundamentally different. Figure \ref{fig:1} shows the
$\Psi_{\alpha,0}$ together with one of the radial basis functions used
for its construction. In fact, it looks more similar to  the barycentric form of rational interpolants  \cite{float}.
\begin{figure}[ht]
\includegraphics[width=3in]{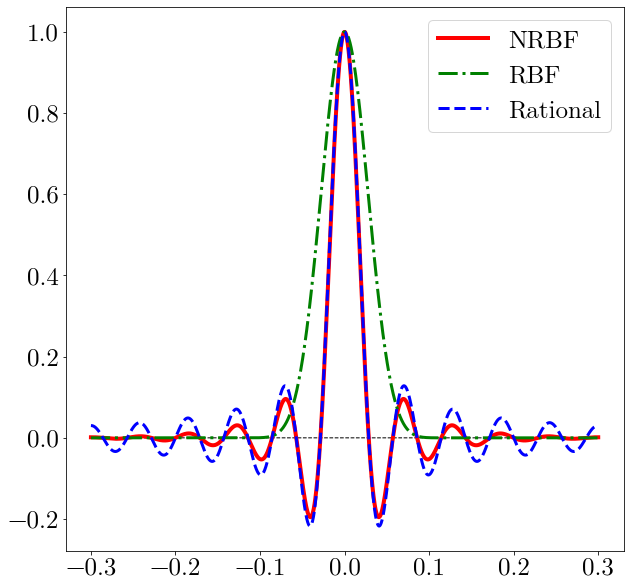}
\caption{The nodal radial basis function (NRBF) with 0 as its main
support node and the radial basis function (RBF) that was used to
generate it. The rational basis function centered on the same node is
also given. We show only a portion of the interval {[}-1,1{]} for
clarity. Each interval nodes (besides 0) can be found where the nodal
radial basis function crosses the x-axis.}
\label{fig:1}
\end{figure}
Clearly, Eq. (\ref{eq:4}) and Eq. (\ref{eq:9}) are equivalent and the interpolants
$\overline{f}$ and ${\overline{\overline{f}}}$ are
mathematically identical. While Eq. (\ref{eq:7}) is used in the radial point
interpolation collocation method (RPICM) \cite{wang}  or pseudo-spectral methods \cite{fasshauer2005rbf,fasshauer2007meshfree}, it is important to note that we defined here the function $\Psi_{\alpha,x_{j}}$ to be the interpolants of
the translated impulse function $\delta(x - x_{j})$, a definition similar to the construction of cardinal functions\cite{buhmann1990multivariate}. However, there is no advantage in using
 $\Psi_{\alpha,x_{j}}$ as an interpolation method. The equivalence
between Eq. (\ref{eq:4}) and Eq. (\ref{eq:9}) shows that we get the exact same interpolant, yet we need to compute many radial basis functions at every node in Eq. (\ref{eq:9}) and then perform 
a linear system solve to form a single $\Psi_{\alpha,x_{j}}$. 

However, the functions $\Psi_{\alpha,x_{j}}$ can be used efficiently in
solving differential equations, since the interpolating weights of a
function \emph{f} using $\Psi_{\alpha,x_{j}}$ are the values of the
functions \emph{f} at the nodes \emph{x\textsubscript{j}}. The main
reason is that the functions $\Psi_{\alpha,x_{j}}$ are an orthonormal
basis of $\overline{U}$, the space of interpolants on \emph{U}, for
the inner product defined as
\begin{equation*}
<\overline{f}.\overline{g}> = \sum_{k = 1}^{n}|{\overline{f}\left( x_k \right)\overline{g}(x_k)|},
\end{equation*}
for any two interpolants $\overline{f}$ and $\overline{g}$ in
$\overline{U}$.\\*
\begin{thrm} \emph{The set of functions $\{\Psi_{x_j}\}_{x_j\in U}$ forms an orthonormal basis for the inner product of interpolated functions } $<.>$.\\
\end{thrm}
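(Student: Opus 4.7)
The plan is to prove the two requirements for an orthonormal basis separately: first orthonormality of the set $\{\Psi_{\alpha,x_j}\}$, then the fact that this set spans $\overline{U}$.

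For the orthonormality, I would compute $<\Psi_{\alpha,x_i},\Psi_{\alpha,x_j}>$ directly from the definition of the inner product. Substituting the defining property of the NRBF from Eq.~(\ref{eq:6}), namely $\Psi_{\alpha,x_j}(x_k)=\delta(x_k-x_j)$, the sum $\sum_{k=1}^{n}\Psi_{\alpha,x_i}(x_k)\Psi_{\alpha,x_j}(x_k)$ collapses to $\sum_{k=1}^{n}\delta(x_k-x_i)\delta(x_k-x_j)$. Every term vanishes unless $x_k=x_i=x_j$, so the sum equals $\delta(x_i-x_j)$. This is precisely the Kronecker delta, establishing orthonormality.

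For the spanning property, I would appeal directly to Theorem~1. Equation~(\ref{eq:9}) shows that any interpolant $\overline{f}\in\overline{U}$ admits the representation $\overline{f}(x)=\sum_{i}f(x_i)\Psi_{\alpha,x_i}(x)$, so $\{\Psi_{\alpha,x_j}\}_{x_j\in U}$ spans $\overline{U}$. To confirm this set is actually a basis, I would note that orthonormality forces linear independence in the standard way: if $\sum_i c_i\Psi_{\alpha,x_i}=0$ in $\overline{U}$, taking the inner product with $\Psi_{\alpha,x_j}$ on both sides and using orthonormality yields $c_j=0$ for each $j$. Alternatively, one can simply note that $\overline{U}$ is $n$-dimensional (an interpolant is determined by its nodal values) and we have exhibited $n$ linearly independent elements.

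I do not anticipate a real obstacle here: the argument is essentially an unpacking of the definition in Eq.~(\ref{eq:6}) and a direct application of Theorem~1. The only subtlety worth flagging is the choice of inner product, which uses only nodal values; this is precisely what makes the NRBF orthonormal, since each $\Psi_{\alpha,x_j}$ was engineered to be the nodal indicator of $x_j$. If one instead used an $L^2$-type inner product over the domain, the result would fail, so the proof should emphasize that orthonormality is intrinsic to this discrete inner product.
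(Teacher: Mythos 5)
Your proof is correct and the orthonormality computation is exactly the paper's own argument: substitute the defining property $\Psi_{\alpha,x_j}(x_k)=\delta(x_k-x_j)$ from Eq.~(\ref{eq:6}) into the discrete inner product and collapse the sum to a Kronecker delta. You go slightly further than the paper by also verifying the spanning and linear-independence half of the ``basis'' claim via Theorem~1, which the paper's proof leaves implicit; that addition is sound and arguably makes the proof more complete, but it is not a different route.
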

\begin{proof} Using the definition of the nodal radial basis function given by Eq. (\ref{eq:6}) $$\forall i,j\,\,<\Psi_{x_i}.\Psi_{x_j}>=\sum_{k=1}^{n}|\Psi{x_i}(x_k)\Psi_{x_j}(x_k)|=\sum_{k=1}^{n}\delta_{ik}\delta_{jk}$$
So, $<\Psi_{x_i}.\Psi_{x_j}>=0\iff i\ne j$ and $<\Psi_{x_i}.\Psi_{x_j}>=1\iff i= j$.
\end{proof}
Based on the definition of the nodal
radial function from Eq. (\ref{eq:7}), the spatial derivative
$\partial_{k}\Psi_{\alpha,x_{j}}$ can be computed easily

\begin{equation}\label{eq:11}
\partial_{k}\Psi_{\alpha,x_{j}}(x) = \sum_{i}^{}\Omega_{\text{ij}}\partial_{k}\Phi_{\alpha}(x_{i} - x).
\end{equation}
Reverting to the modal basis function at this stage, we now rewrite Eq.
(11) as
\begin{equation}\label{eq:12}
\partial_{k}\Psi_{\alpha,x_{j}}(x) = \sum_{i}^{}{\Omega_{\text{ij}}/\alpha}(d_{r}\phi)(||x_{i} - x||_{d}/\alpha)\partial_{k}||x_{i} - x||_{d}.
\end{equation}
 While Eq. (\ref{eq:12}) uses the elements of
$\lbrack\Phi_{\alpha}\rbrack^{- 1}$ we will show later that it is not necessary to compute the matrix inverse of $[\Phi_{\alpha}]$. Rather, the solver uses a Cholesky decomposition to compute the relevant spatial derivatives . The functions $\partial_{k}\Psi_{\alpha,x_{j}}(x)$
only depend on the node distribution and should be recomputed only when
nodes change locations, or when nodes are added (or dropped).

$\Psi_{\alpha,x_{j}}(x)$ is nodal in the sense that it is defined
using nodes, rather modal, i.e. the scaled translation of the modal
function $\phi$ used in Eq. (\ref{eq:1}). It is also radial in the sense that it solely defined by a series of Euclidian distances . Since the family
of functions defined by all the points inside the domain form an
orthogonal basis for this domain, we call the functions
$\Psi_{\alpha,x_{j}}$ nodal radial basis functions (NRBF) in the
remainder of this paper.

Another key advantage of nodal radial basis functions, over more conventional discretization schemes, boils down to computing discretized derivatives as a sum of exact derivatives, given by Eq. (\ref{eq:12}). Exact derivatives, as opposed to discretized derivatives, are always conservative i.e. the derivative operator $\partial_{q}$ with respect to the variable $q$ is such that for any smooth function $g$
\begin{equation}\label{eq:13}
\int_{\Omega}^{}{\partial_{q}g\text{dv}} = \int_{\partial\Omega}^{}{gn_{q}\text{ds}}.
\end{equation}
This is the divergence theorem applied to a single direction.\\*
\begin{thrm} \emph{The derivative operator ${\widetilde{\partial}}_{q}$, used in Eq. (\ref{eq:21}) and defined as}
\begin{equation*}
{\widetilde{\partial}}_{q}f = \sum_{i}^{}\overline{f}(x_{i})\partial_{q}\Psi_{\alpha,x_{i}}
\end{equation*}
\emph{for any interpolant $\overline{f}$ is conservative.}\\
\end{thrm}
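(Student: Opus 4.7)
The plan is to show that $\widetilde{\partial}_q$ is nothing more than the \emph{exact} derivative $\partial_q$ applied to the NRBF interpolant of $f$, at which point conservation follows immediately from the pointwise divergence identity stated in Eq. (\ref{eq:13}).

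First I would introduce the auxiliary function $g(x) = \sum_i \overline{f}(x_i)\,\Psi_{\alpha,x_i}(x)$ and observe that, by linearity of $\partial_q$ acting on $x$ (the nodal values $\overline{f}(x_i)$ are constants), one has the identity $\widetilde{\partial}_q f = \partial_q g$. Second, I would note that $g$ is smooth on $\Omega$ because each $\Psi_{\alpha,x_i}$ is a finite linear combination of smooth radial basis functions via Eq. (\ref{eq:7}); in particular $g$ is smooth enough for the classical divergence theorem to apply.

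Third, I would use the nodal property $\Psi_{\alpha,x_i}(x_j)=\delta_{ij}$ from Eq. (\ref{eq:6}) to check that $g(x_j)=\overline{f}(x_j)$ for every node $x_j \in U$, so that $g$ is the unique interpolant of the data $\{\overline{f}(x_i)\}$ on $U$; combined with Theorem 1 this gives $g\equiv \overline{f}$ throughout $\Omega$. Fourth, I would integrate $\widetilde{\partial}_q f = \partial_q g$ over $\Omega$ and apply the identity in Eq. (\ref{eq:13}) to $g$, obtaining
\begin{equation*}
\int_\Omega \widetilde{\partial}_q f \, \mathrm{dv} \;=\; \int_\Omega \partial_q g \, \mathrm{dv} \;=\; \int_{\partial \Omega} g\, n_q\, \mathrm{ds} \;=\; \int_{\partial \Omega} \overline{f}\, n_q\, \mathrm{ds},
\end{equation*}
which is precisely the conservation statement in Eq. (\ref{eq:13}) at the level of the interpolant $\overline{f}$.

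The main obstacle I anticipate is not an analytic one but a conceptual one: making sure the reader is comfortable that "conservative" in this discretized setting means that the boundary integral is taken against the interpolant $\overline{f}$, not against $f$ itself. This is essentially free here because $\widetilde{\partial}_q$ only ever sees $f$ through its nodal values, and those values uniquely determine $\overline{f}$; so the machinery reduces to applying the exact divergence theorem to a smooth finite linear combination of NRBFs. No other estimates or compatibility conditions are needed.
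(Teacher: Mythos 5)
Your proof is correct and rests on the same observation as the paper's: $\widetilde{\partial}_q f$ is the \emph{exact} derivative of a smooth finite linear combination of radial basis functions, so Eq.~(\ref{eq:13}) applies directly (the paper expands via Eq.~(\ref{eq:11}) and invokes linearity term by term over the $\partial_q\Phi_\alpha$, while you aggregate the sum into the single function $g$ first). Your additional step identifying $g\equiv\overline{f}$ via the nodal property and Theorem~1 is a harmless bonus that pins down the boundary integrand as $\overline{f}$, but it is not needed for the conservation claim itself.
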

\begin{proof} The derivative can be written as
\begin{equation*}
{\widetilde{\partial}}_{q}f = \sum_{i}^{}f(x_{i})\sum_{j}^{}\Omega_{\text{ij}}\partial_{q}\Phi_{\alpha}(x - x_{j})
\end{equation*}
using Eq. (\ref{eq:11}). Since ${\widetilde{\partial}}_{q}$ is a finite sum of
exact derivatives, it trivially verifies Eq. (\ref{eq:13}). 
\end{proof}

\section{Basis characteristics of nodal radial basis functions using
compact radial basis functions}

In this section, we focus on the one-dimensional case, mostly with
homogeneously distributed nodes, to illustrate the properties of nodal
radial basis functions in a simple framework. However, this work can be
directly extended to multiple space dimensions and random node
distributions.

The key advantage of nodal radial basis functions is
$\forall x_{i},x_{j}\,\psi_{\alpha,x_{j}}(x_{i}) = \delta_{\text{ij}}$
by construction, a property shared with Lagrange polynomials and
rational interpolants\textsuperscript{8}. Yet, they retain the
multivariate interpolation capabilities on a set of randomly distributed
nodes, a theme central to radial basis function interpolation. Unless
otherwise stated, the modal radial basis functions used to form
$\psi_{\alpha,x_{j}}$ are compact Wendland functions \cite{wend95} defined by
$$\psi_{p,0}(r) = (1 - r)_{+}^{p} = \left\{ \begin{matrix}
(1 - r)^{p} \\
0 \\
\end{matrix}\ \begin{matrix}
\text{\ \ \ \ \ for\ }0 \leq r \leq 1 \\
\text{for\ }r > 1 \\
\end{matrix} \right.\ $$ and
$$\psi_{p,q}(r) = \mathfrak{I}^{q}\psi_{p,0}\text{\ \ for\ }0 \leq r \leq 1,$$
where \emph{p} and \emph{q} are integers. The operator $\mathfrak{I}$
above is defined as $\mathfrak{I}f(r) = \int_{r}^{\infty}{f(t)tdt}$ for $0 \leq r$.
Wendland functions are $C^{k}$ and can be computed analytically and
lead to a strictly positive definite matrix
$\lbrack\Phi_{\alpha}\rbrack$ in $\mathbb{R}^{d}$, where
\emph{d}\textless{}\emph{p} and \emph{k}=2\emph{q}.

Figure \ref{fig:1} shows the difference between the nodal radial basis function
$\psi_{\alpha = 7\varepsilon_{0},x = 0}$ and the modal radial
basis function $\phi_{\alpha = 7\varepsilon_{0}}$ that was used to
generate it on the interval $[-1,1]$, with $\varepsilon_{0}$ the
distance between two consecutive nodes. A rational basis
function \cite{float}
is shown for comparison. While the nodal and rational basis functions
are similar close to the main support node, the radial basis functions
quickly drop to zero away from this node. To this extent, the nodal
radial basis functions resemble more compact functions like
interpolets \cite{desl}

However, some radial basis functions are not ideal candidates to build
nodal radial basis functions. For instance, Gaussian functions, that are
infinitely smooth can trigger Runge's phenomenon \cite{forn} in the interpolation
of the impulse function, leading to the construction of our nodal radial
basis function, for width parameters relatively small, as shown in
Figure \ref{fig:2}. There is no Runge's phenomenon for Wendland functions, even
for large width parameters. Since this work focuses on solving a partial
differential equation, getting large oscillations near the boundary is
problematic for two reasons. First, large oscillations are usually
caused by ill-conditioned matrices, a well-known problem when working
with radial basis functions, that will limit the precision of the
interpolation or the differential equation solver. Second, these solvers
are sensitive to boundary condition errors and, using functions that are
widely oscillatory there would clearly be problematic.
\begin{figure}[ht]
\includegraphics[width=2.7in]{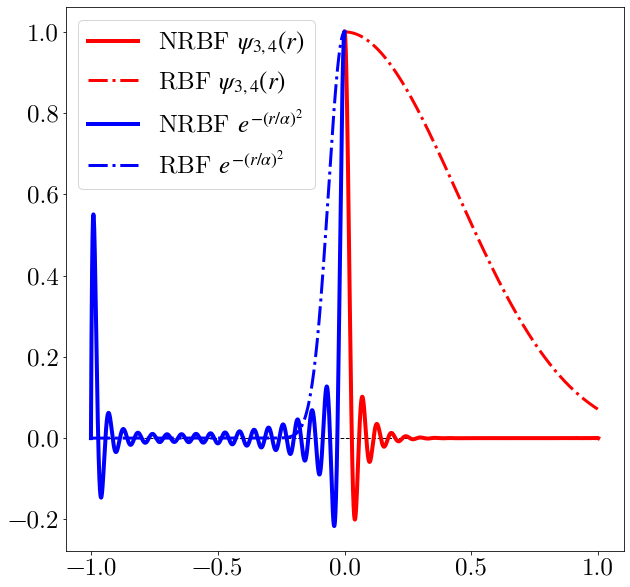}
\caption{Left: The Gaussian nodal radial basis function is wide enough to trigger Runge's phenomenon in the nodal radial basis function at the edge of the domain. Right: Wendland functions (here $\psi_{3,4}$) yield nodal radial basis functions with no boundary instabilities.}
\label{fig:2}
\end{figure}
The number of sideband oscillations surrounding the main support node
can be controlled by the degree of smoothness and the width parameters
of the radial basis function. Figure \ref{fig:3}-a shows that an increase in the
smoothness of radial basis functions (using Wendland functions
$\psi_{3,1}$ through$\psi_{3,4}$ leads
to larger sideband oscillations, a direct consequence of a weaker
exponential decay of scaling coefficients. However, this trend reverses
if the width parameter is too small (as seen in Figure \ref{fig:3}-b).
\begin{figure}[ht]
a)\includegraphics[width=3in]{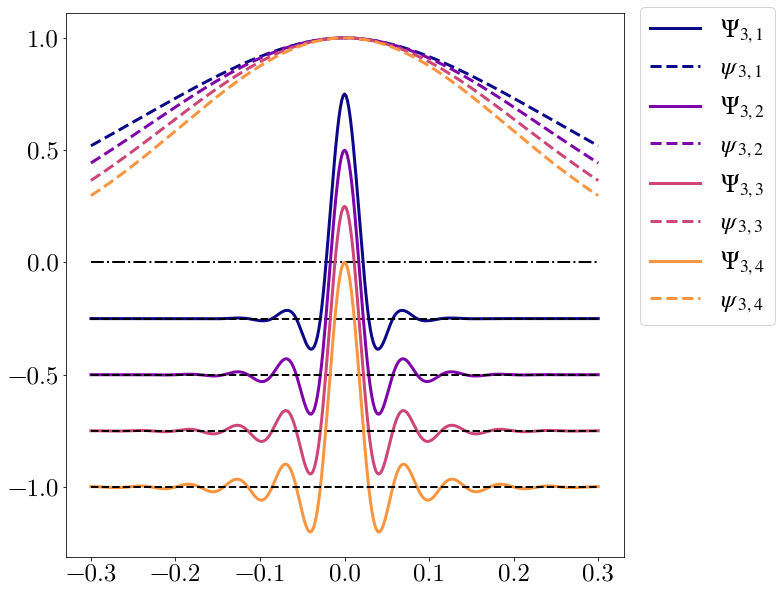}
b)\includegraphics[width=3in]{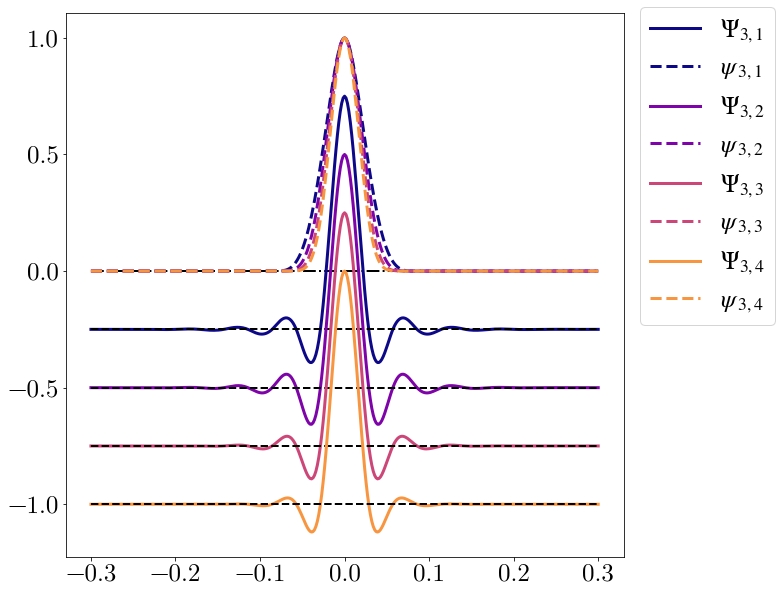}
\caption{
Nodal radial basis functions $\psi_{d,k}$ for Wendland
functions $\psi_{d,k}$ of different degrees of smoothness k for
width parameters a) $\alpha=35\varepsilon_0$ and b) $\alpha=3\varepsilon_0$.
Here $\varepsilon_0$ is the smallest distance between neighboring
nodes in U. The interval was truncated for clarity. The nodal functions
are plotted using a solid line. A vertical offset was added to
disentangle the oscillations of the function. The corresponding Wendland
functions are plotted using interrupted lines and were not given any
offset. Note here the nodal function subscripts identify the Wendland
functions rather than the geometrical scaling and translating parameters
as it is done in the text.}
\label{fig:3}
\end{figure}

This observation leads to the question of compactness. A priori, nodal
radial basis functions are not compact. Even if
$\lbrack\Phi_{\alpha}\rbrack$ is a banded matrix (i.e. non-zero
elements are close to the diagonal),
$\lbrack\Phi_{\alpha}\rbrack^{- 1}$ not necessarily banded and, in
fact, can be dense. Figure \ref{fig:4} shows the scaling coefficients for several
nodal radial basis functions with the same support node (0 in this case
case). The logarithmic scale clearly shows the exponential decay of the
coefficient away from the main support node. In some cases, the
exponential decay is not constant, and it depends on the smoothness and
width parameter. In fact, after the initial decay, the function
rebounds. This rebound gets pushed further out as the width parameter
increases (see Figure \ref{fig:4}-a and b). As this point, boundary effects become
dominant, leading to a weaker exponential decay. We notice that these
trends tend to disappear as the function smoothness increases (see
Figure \ref{fig:4}-d). Yet, it is relatively easy to truncate the functions
generated by radial basis functions with low smoothness. Truncation is
simply enforced by dropping all scaling coefficients that are o(1). As
the smoothness and width parameter increase, more coefficients should be
retained. On small domains, like the one used in this section,
truncation is not possible since there is no coefficient o(1) for
$\psi_{3,4}$ when width parameters are larger than 15
nodes, as shown in Figure \ref{fig:4}-d.
\begin{figure}[ht]
\includegraphics[width=3.25in]{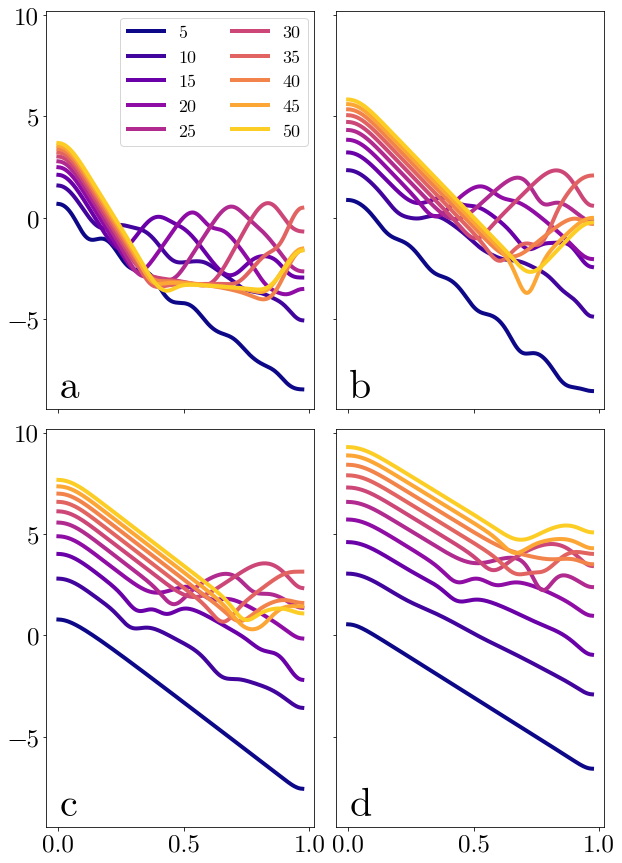}
\caption{The log\textsubscript{10} value of the scaling coefficients
used to form nodal radial basis function centered on 0 and using a)
$\psi_{3,1}$, b)$\psi_{3,2}$, c)$\psi_{3,3}$
and d) $\psi_{3,4}$ for different width parameters, given in
number of nodes. Each coefficient is associated with a node location.}
\label{fig:4}
\end{figure}

Figure \ref{fig:3} also shows that that, after some initial variations, the shape
of the nodal radial basis functions remains virtually the same as the
width parameter increases. This is a clear departure from radial basis
functions, where the width parameter clearly impacts the shape of the
function. As the width parameter grows (and the shape parameters goes to
0), radial basis functions become remarkably flat, leading to better
interpolants at the cost of ill-conditioned linear systems \cite{schab}. The resulting
trade-off between interpolation accuracy and numerical instabilities is
difficult to quantify, even if qualitative arguments can be inferred
from a wide range of studies (e.g. Ref. \cite{fass07}). Nodal radial basis
functions are less dependent on the width parameter than radial basis
functions. Figure \ref{fig:5}-a to c shows that when changing the width parameter
by 10\%, the maximum difference between nodal radial basis functions
stays below 10\textsuperscript{-5}, for width parameters spanning 50
nodes. Under the same conditions, Figure \ref{fig:5}-d to e shows the maximum
change between consecutive radial basis functions is much larger than
10\textsuperscript{-2}. While not shown on the figure, this is trends is
also valid for basis functions centered on the domain boundaries. The
shape of the nodal radial basis functions there differs from the shape
of the functions in the domain interior, as shown in Figure \ref{fig:6}. However,
their construction is identical to the other functions and does not
require special treatment. The NRBF method presented here is not bound to Wendland functions. However, the decays of the coefficients is crucial suppressing the Gibbs phenomenon seen on Fig. \ref{fig:2} and they should be studied carefully before using other types of radial basis functions \cite{fornberg2008locality}, especially if they are infinitely smooth. 
\begin{figure}[ht]
\includegraphics[width=4in]{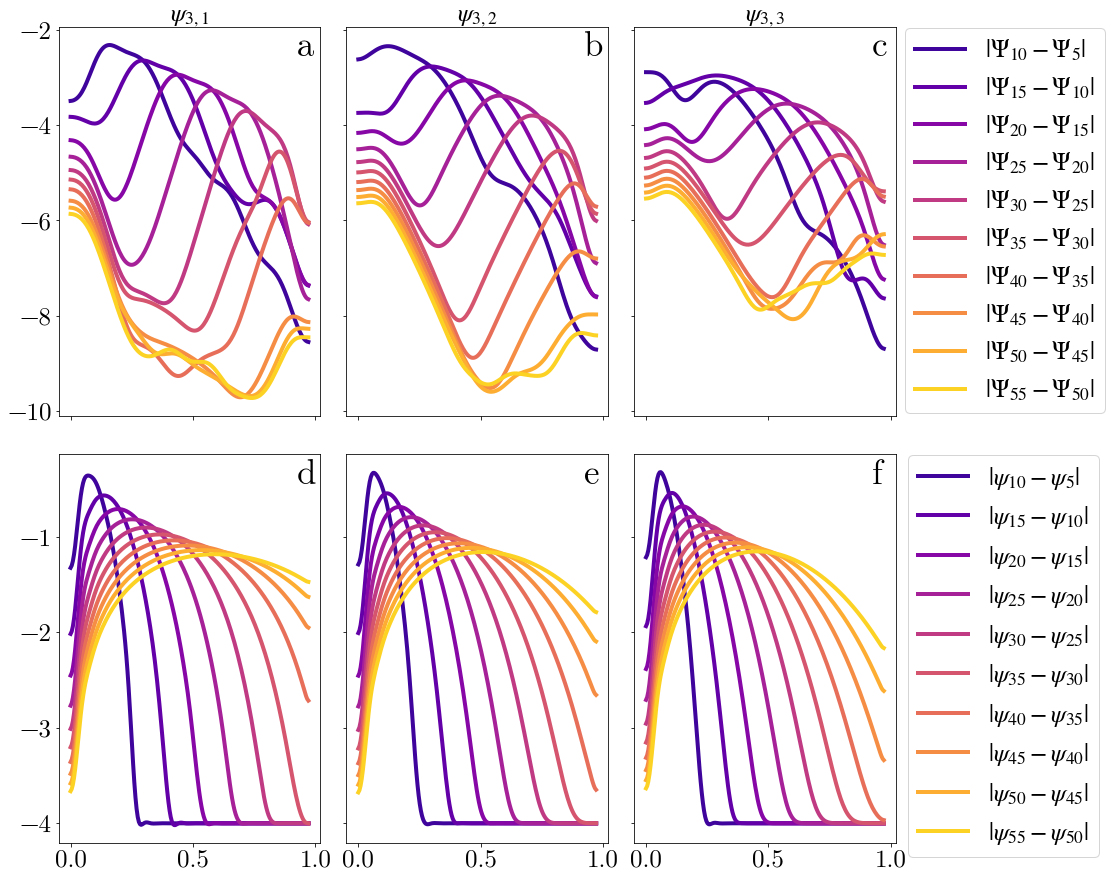}
\caption{The log\textsubscript{10} difference between nodal radial
basis functions using a) $\psi_{3,1}$, b) $\psi_{3,2}$
and c) $\psi_{3,3}$ with consecutive width parameters (given in
number of nodes). The difference was computed halfway between nodes
since difference at the nodes is 0 by construction. The
log\textsubscript{10} difference between two consecutive radial basis
functions c) $\psi_{3,1}$, b) $\psi_{3,2}$ and d)
$\psi_{3,3}$, using 10\textsuperscript{-4} cut-off.}
\label{fig:5}
\end{figure}
\begin{figure}[ht]
\includegraphics[width=4in]{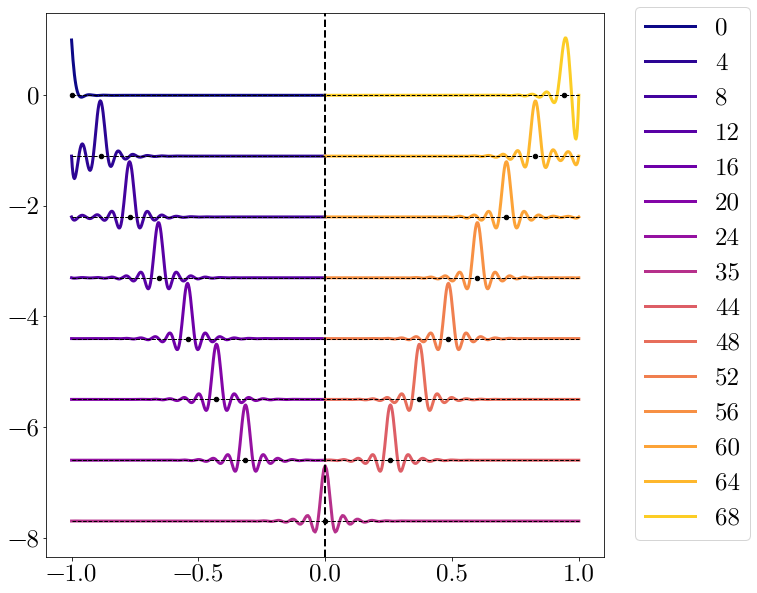}
\caption{Nodal radial basis functions for different support nodes,
indicated by a black dot). The total number of nodes is 71. The effect
of the boundary, while dramatic, does not really depend on the width
parameters for parameters larger than 50\%. A bias was added to each
function to improve the clarity of the plot.}
\label{fig:6}
\end{figure}

\section{The linear advection equation solved using Euler's backward
method}

Now that we have explored the basis properties of nodal radial basis functions, we would like to solve the following partial differential equation in multiple dimensions
\begin{equation}\label{eq:16}
\frac{\partial\rho}{\partial t} + \overrightarrow{\nabla}.(\rho\overrightarrow{u}) = S
\end{equation}
on the discretized domain \emph{U}. This equation is hyperbolic and
describes mass conservation, where $\rho$ is a mass density,
$\overrightarrow{u}$ is the velocity and \emph{S} is the source term.
The backward Euler time discretization scheme is given by
$$\rho_{t} + \overrightarrow{\nabla}.\left( \rho_{t}\overrightarrow{u} \right)\Delta t = \rho_{t - \Delta t} + S_{t}\Delta t.$$
It is convenient to drop the subscript term $t$ and rewrite the
equation as
\begin{equation}\label{eq:17}
\rho + \overrightarrow{\nabla}.\left( \rho\overrightarrow{u} \right)\Delta t = \rho_{- \Delta t} + S\Delta t = G
\end{equation}
where $\rho_{- \Delta t}$ is the solution at the previous time step.
\subsection{Nodal radial basis function
solver}
Using the nodal radial basis functions defined earlier, we can
interpolate the mass density $\rho$, the mass density flux
$\rho\overrightarrow{u}$ in Eq. (\ref{eq:17}) as
\begin{equation*}
\rho = \sum_{j}^{}\rho_{j}\Psi_{\alpha,x_{j}},
\end{equation*}
and
\begin{equation}\label{eq:19}
\rho\overrightarrow{u} = \sum_{j}^{}\rho_{j}{\overrightarrow{u}}_{j}\Psi_{\alpha,x_{j}},
\end{equation}
as well as
\begin{equation*}
G = \sum_{j}^{}G_{j}\Psi_{\alpha,x_{j}}.
\end{equation*}
Notice that we make no assumptions regarding the velocity distribution.
It could be space and time dependent at this point. Using NRBFs in Eq.
(\ref{eq:17}), we get
\begin{equation}\label{eq:21}
\sum_{j}^{}\rho_{j}\Psi_{\alpha,x_{j}} + \sum_{j}^{}{\sum_{k}^{}\rho_{j}}u_{k,j}\partial_{k}\Psi_{\alpha,x_{j}}\Delta t = \sum_{j}^{}G_{j}\Psi_{\alpha,x_{j}},
\end{equation}
where $u_{k,j}$ is the $k^{\text{th}}$ component of the vector
$\overrightarrow{u}$ and $\partial_{k}$ is the partial derivative
along the direction \emph{k}. So, we get
$\forall x_{i} \in U\,\sum_{j}^{}\rho_{j}\Psi_{\alpha,x_{j}}(x_{i}) + \sum_{j}^{}\rho_{j}\sum_{k}^{}u_{k,j}\partial_{k}\Psi_{\alpha,x_{j}}(x_{i})\Delta t = \sum_{j}^{}G_{j}\Psi_{\alpha,x_{j}}(x_{i})$.
Using Eq. (\ref{eq:6}) we now have
\begin{equation}\label{eq:22}
\forall x_{i} \in U\,\rho_{i} + \sum_{j}^{}\rho_{j}\sum_{k}^{}u_{k,j}\partial_{k}\Psi_{\alpha,x_{j}}(x_{i})\Delta t = G_{i}.
\end{equation}
Eq. (\ref{eq:22}) is valid for any number of spatial dimensions, is completely
agnostic of the node distribution and can be written in matrix form
\[(I - \Delta tA)\lbrack\rho\rbrack = \lbrack G\rbrack.\]
Here $I$ is the identity matrix and the elements of \emph{A} are
\begin{equation*}
A_{\text{ij}} = - \sum_{k}^{}u_{k,j}\partial_{k}\Psi_{\alpha,x_{j}}(x_{i}).
\end{equation*}
The solution to this system of equations is found by inverting the
matrix $(I - \Delta tA)$.
\begin{equation}\label{eq:24}
\lbrack\rho\rbrack = (I - \Delta tA)^{- 1}\lbrack G\rbrack.
\end{equation}
\begin{thrm} \emph{We can approximate the solution $[\rho]$ with $[\rho^*]$ using only matrix-vector products}
\begin{equation}\label{eq:27}
\lbrack\rho^{*}\rbrack = \sum_{k = 0}^{N}A\lbrack G_{k}\rbrack\Delta t^{k},
\end{equation}
\emph{where}
\begin{equation*}
\forall k > 0,\lbrack G_{k}\rbrack = A\lbrack G_{k - 1}\rbrack\,\emph{and}\, A\lbrack G_{0}\rbrack = \lbrack G\rbrack.
\end{equation*}
\end{thrm}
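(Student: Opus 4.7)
The plan is to recognize Eq.~(\ref{eq:27}) as the truncated Neumann series for $(I - \Delta t A)^{-1}[G]$ and to verify that the proposed recursion reproduces the terms of that series using only matrix--vector products. Starting from the exact formula $[\rho] = (I - \Delta t A)^{-1}[G]$ in Eq.~(\ref{eq:24}), I would invoke the Neumann identity $(I - X)^{-1} = \sum_{k=0}^{\infty} X^k$ with $X = \Delta t A$, valid whenever the spectral radius $\rho(\Delta t A) < 1$. This immediately yields $[\rho] = \sum_{k=0}^{\infty} \Delta t^k A^k [G]$, and $[\rho^*]$ is then the natural partial-sum approximation truncated after $N+1$ terms.

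Next I would verify by a short induction on $k$ that the recursion $[G_k] = A[G_{k-1}]$, together with the base condition $A[G_0] = [G]$, produces $A[G_k] = A^k[G]$: the base case is immediate, and the inductive step gives $A[G_k] = A \cdot A[G_{k-1}] = A \cdot A^{k-1}[G] = A^k[G]$. Substituting into Eq.~(\ref{eq:27}) recovers exactly the partial sum $\sum_{k=0}^{N} \Delta t^k A^k[G]$. Because each step of the iteration is a single multiplication of the matrix $A$ against the previously computed vector $[G_{k-1}]$, the entire construction of $[\rho^*]$ avoids any matrix inversion or matrix--matrix product, as asserted.

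The main obstacle is the convergence condition. The truncation error has size controlled by $\|(\Delta t A)^{N+1}(I - \Delta t A)^{-1}[G]\|$, which decays geometrically in $N$ only when $\rho(\Delta t A) < 1$. I would therefore frame the approximation as conditional on a CFL-type bound $\Delta t < 1/\rho(A)$, and translate that via the definition $A_{ij} = -\sum_k u_{k,j}\partial_k \Psi_{\alpha,x_j}(x_i)$ into a practical restriction on the time step in terms of the maximum velocity and the nodal spacing (which sets the scale of the NRBF derivatives). A secondary point is the slightly unusual base condition $A[G_0] = [G]$: to preserve the ``no matrix inversion'' claim it must be read as a bookkeeping identity rather than as a linear solve, which is harmless since one may equivalently set $[G_0] := [G]$ and re-index the series to $[\rho^*] = \sum_{k=0}^{N} \Delta t^k [G_k]$, making the purely matrix--vector nature of the procedure manifest.
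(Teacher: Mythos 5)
Your proposal follows essentially the same route as the paper: expand $(I-\Delta t A)^{-1}$ as a Neumann series, truncate after $N+1$ terms, and observe that the recursion $[G_k]=A[G_{k-1}]$ realizes each term $A^k[G]$ by successive matrix--vector products $A(Av)$ rather than matrix powers. Your added discussion of the convergence condition $\rho(\Delta t A)<1$ and the reading of the base condition $A[G_0]=[G]$ is a welcome tightening of the paper's looser ``for $\Delta t$ small enough,'' but it does not change the argument.
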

\begin{proof}
Since $(1 - x)^{- 1} = \sum_{k = 0}^{\infty}x^{k}$,
$(I - \Delta tA)^{- 1}$ can be approximated by a truncated series for
$\Delta t$ small enough, and we get
\begin{equation}\label{eq:25}
(I - \Delta t A)^{- 1} = \sum_{k = 0}^{N}{\Delta t^{k}A^{k}},
\end{equation}
where $A^{0} = I$. As a result, one can find the approximate solution
$\rho^{*}$ directly using
\begin{equation}\label{eq:26}
\lbrack\rho^{*}\rbrack = \sum_{k = 0}^{N}A^{k}\lbrack G\rbrack\Delta t^{k}.
\end{equation}
We also use the product of the matrix \emph{A} by a vector \emph{v} as $A(Av)$ rather than $(AA)v$ since it is more efficient to compute when the number of nodes is large.
\end{proof}

However, taking time steps small enough to warrant the approximation leading to Eq. (\ref{eq:26}) is not realistic in practice. However, if the velocity $\overrightarrow{u}$ and the source term \emph{S} vary on a time scale $\Delta T$, which is large compared to our choice of $\Delta t$, then the algorithm can ``step over'' the slow temporal change in source and density. Using Eq. (\ref{eq:24}) as our induction relation, we can start from a given solution at $t - \Delta T$ where \emph{P} is defined as $\Delta T = P\Delta t$. At this point, we can substantially reduce $\Delta t$ while increasing $P$ in such a way that the computational time step $\Delta T$ keeps the method stable. This method is implicit, and time stepping is not limited by the Courant--Friedrichs--Lewy (CLF) condition \cite{cour}. However, this method is still limited by a Nyquist-Shannon condition, as taking a time step $\Delta T$ much larger than the physical time evolution of the solution cannot capture the actual evolution of the solution. 
\begin{thrm}
We can approximate the solution at \emph{t} using a large time step $\Delta T=P\Delta t$
\begin{equation}\label{eq:29}
\lbrack\rho\rbrack = (I - \Delta tA)^{- P}\lbrack G_{- P\Delta t}\rbrack
\end{equation}
where
$\left\lbrack G_{- P\Delta t} \right\rbrack = \left\lbrack \rho_{- P\Delta t} \right\rbrack + \sum_{k = 0}^{P - 1}(I - \Delta tA)^{k}\left\lbrack S_{- P\Delta t} \right\rbrack\Delta t$.

\end{thrm}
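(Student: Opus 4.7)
The plan is to iterate the single-step backward Euler update of Eq. (\ref{eq:24}) exactly $P$ times, treating $A$ (and hence the matrix $(I-\Delta t A)^{-1}$) and the source $S$ as frozen over the coarse time interval $[t-P\Delta t,\,t]$. This freezing is justified by the hypothesis stated immediately before the theorem: both $\vec u$ and $S$ vary on a slow time scale $\Delta T = P\Delta t$, so replacing $S_{t-j\Delta t}$ by $S_{-P\Delta t}$ and using a single operator $M := (I - \Delta t A)^{-1}$ throughout introduces only a higher-order error.

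Concretely, I would proceed by finite induction on $P$. The base case $P=1$ is simply Eq. (\ref{eq:24}) applied to $[G] = [\rho_{-\Delta t}] + [S]\Delta t$. For the inductive step, I would write
\begin{equation*}
[\rho_{t-(j-1)\Delta t}] = M\bigl([\rho_{t-j\Delta t}] + [S_{-P\Delta t}]\Delta t\bigr),
\end{equation*}
and substitute this recurrence into itself $P$ times. After $P$ substitutions one obtains
\begin{equation*}
[\rho_t] \;=\; M^P [\rho_{t-P\Delta t}] \;+\; \sum_{k=1}^{P} M^{k}\,[S_{-P\Delta t}]\,\Delta t .
\end{equation*}

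The remaining step is purely algebraic: factor out $M^P = (I-\Delta t A)^{-P}$ from the right-hand side. Using $M^{k-P} = (I-\Delta t A)^{P-k}$ and reindexing with $j = P-k$, the source sum becomes $\sum_{j=0}^{P-1}(I-\Delta t A)^{j}[S_{-P\Delta t}]\Delta t$, and one reads off
\begin{equation*}
[\rho_t] \;=\; (I-\Delta t A)^{-P}\!\left([\rho_{-P\Delta t}] + \sum_{j=0}^{P-1}(I-\Delta t A)^{j}[S_{-P\Delta t}]\Delta t\right),
\end{equation*}
which is exactly the stated formula with the bracketed quantity identified as $[G_{-P\Delta t}]$.

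The main obstacle is not the induction itself, which is mechanical, but rather making the freezing assumption precise: one has to argue that the error incurred by holding $A$ and $S$ constant over $\Delta T$ is controlled by $\Delta T$ times the slow-scale variation, and hence negligible compared with the truncation error already present in the backward Euler discretization. A secondary subtlety is the index bookkeeping when relabelling the geometric-style sum $\sum_{k=1}^{P}M^k$ as $M^P \sum_{j=0}^{P-1}M^{-j}$; care is needed to ensure the exponent $k=0$ term in the final expression correctly corresponds to the source contribution from the step immediately following $t-P\Delta t$, rather than being absorbed into $[\rho_{-P\Delta t}]$.
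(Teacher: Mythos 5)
Your proposal is correct and follows essentially the same route as the paper: both iterate the single-step backward Euler relation of Eq.~(\ref{eq:24}) $P$ times with the source held frozen over the coarse interval, then factor out $(I-\Delta t A)^{-P}$ to identify the bracketed quantity as $\left\lbrack G_{-P\Delta t}\right\rbrack$. The paper merely writes out the first two substitutions and appeals to induction, whereas you make the reindexing of the source sum explicit; the content is the same.
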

\begin{proof}
Starting from Eq. (\ref{eq:24}) we get $$\lbrack\rho\rbrack = (I - \Delta tA)^{- 1}\left[ \rho_{- \Delta t}\right]+ (I - \Delta tA)^{- 1}\left[ S\right]\Delta t.$$ Since we suppose $S$ constant during this interval of time we now have $$\lbrack\rho\rbrack = (I - \Delta tA)^{- 2}\left(\left[ \rho_{- 2\Delta t}\right]+ \left[ S\right]\Delta t+ (I - \Delta tA)\left[ S\right]\Delta t\right).$$We find Eq. (\ref{eq:29}) by induction.
\end{proof}
Using
$\sum_{k = 0}^{P - 1}(1 - x)^{k} = \sum_{k = 0}^{P - 1}( - 1)^{k}\binom{P}{k + 1} x^{k}$
we get
\begin{equation*}
\left\lbrack G_{- \Delta T} \right\rbrack = \left\lbrack \rho_{- \Delta T} \right\rbrack - \sum_{k = 0}^{P - 1}{\left( - \frac{1}{P} \right)^{k + 1} \binom{P}{k + 1} A^{k}\left\lbrack S_{- \Delta T} \right\rbrack}\Delta T^{k + 1}.
\end{equation*}
Besides $A^{k}$ and $\Delta T^{k + 1}$,
$\left( \frac{1}{P} \right)^{k + 1} \binom{P}{k + 1} $
also decays quickly with \emph{k} and we can truncate the finite series
given above to its \emph{M} first terms as
\begin{equation*}
\left\lbrack G_{- \Delta T}^* \right\rbrack = \left\lbrack \rho_{- \Delta T}^* \right\rbrack - \sum_{k = 0}^{M}{\left( - \frac{1}{P} \right)^{k + 1}\binom{P}{k + 1}A^{k}\left\lbrack S_{- \Delta T} \right\rbrack\Delta T^{k + 1}}.
\end{equation*}
In the problem discussed in this paper, \emph{M} is between 10 and 20,
while \emph{P} can be as large as 10\textsuperscript{10}. We can recast
this series as a successive product of a matrix with a vector when
necessary, as we did in Eq. (\ref{eq:27}). So far, we kept the source term $S$
for completion.  We drop this term in the rest of the paper to simplify the discussion as it does not impact mathematical foundations of the method . Since
$(1 - x)^{- P} = \sum_{k = 0}^{\infty} \binom{k + P - 1}{P - 1}x^{k}$,
we can also write Eq. (\ref{eq:29}) as a truncated series where we keep the first
\emph{N} terms
\begin{equation}\label{eq:32}
\left\lbrack \rho^{*} \right\rbrack = \sum_{k = 0}^{N}{\left( \frac{1}{P} \right)^{k}\binom{k + P - 1}{P - 1}A^{k}\left\lbrack G_{- \Delta T}^* \right\rbrack\Delta T^{k}}.
\end{equation}
As for \emph{M}, \emph{N} can also be chosen between 10 and 20. Eq. (\ref{eq:32})
can be transformed into a successive product of a matrix by a vector, as
was done in Eq. (\ref{eq:27}). It is important to note that we only need to know
$\partial_{k}\Psi_{\alpha,x_{j}}(x_{i})$ to solve this is problem. So,
there is no need to compute the matrix inverse of
$\lbrack\Phi_{\alpha}\rbrack$ to get the coefficients
$\Omega_{\text{ij}}$. Eq. \ref{eq:7} shows that we can compute $\partial_{k}\Psi_{\alpha,x_{j}}(x_{i})$ by solving the linear system
\begin{equation*}
\begin{matrix}
\lbrack\Phi_{\alpha}\rbrack\lbrack\partial_{k}\Psi_{\alpha,x_{j}}\rbrack = \lbrack\partial_{k}\Phi_{\alpha}\rbrack \\
\end{matrix}
\end{equation*}
using Cholesky's factorization, which does not require any explicit
matrix inversion to solve Eq (33). So, the proposed solver uses neither
matrix inversions nor matrix products in its final form. It is only
based on matrix-vector multiplications, as other efficient solvers (e.g.
Ref. \cite{saad}).

\subsection{Radial basis function
solver}
We now focus on solving a similar problem using radial basis functions
and compared both methods. The mass density can be rewritten as
\begin{equation}\label{eq:34}
\rho = \sum_{j}^{}\omega_{\rho,j}\Phi_{\alpha,x_{j}}.
\end{equation}
If we suppose constant velocity across the whole domain, we have
\begin{equation}\label{eq:35}
\rho\overrightarrow{u} = \sum_{j}^{}\omega_{\rho,j}\Phi_{\alpha,x_{j}}\overrightarrow{u}.
\end{equation}
We will make this assumption in the remainder of this section. As we did
before, we take \emph{G} to be
\begin{equation*}
\def\labelenumi{(\arabic{enumi})}
\setcounter{enumi}{35}
G = \sum_{j}^{}\omega_{G,j}\Phi_{\alpha,x_{j}}.
\end{equation*}
So, we get $\forall x_{i} \in U\,$
\begin{equation}\label{eq:37}
\sum_{j}^{}\omega_{\rho,j}\Phi_{\alpha,x_{j}}(x_{i}) + \sum_{j}\omega_{\rho,j}\sum_{k}u_{k}\partial_{k}\Phi_{\alpha,x_{j}}(x_{i})\Delta t = \sum_{j}^{}\omega_{G,j}\Phi_{\alpha,x_{j}}(x_{i}).
\end{equation}
We can rewrite this system in matrix form as
\begin{equation}\label{eq:38}
\begin{matrix}
\lbrack\Phi_{\alpha} - \Delta tB\rbrack\lbrack\omega_{\rho}\rbrack = \lbrack\Phi_{\alpha}\rbrack\lbrack\omega_{G, - \Delta t}\rbrack \\
\end{matrix}
\end{equation}
where $B$ is the matrix derivative defined by
$$B_{\text{ij}} = - \sum_{k}^{}{u_{k}\partial_{k}\Phi_{\alpha,x_{j}}\left( x_{i} \right)}.$$

 If we want to use the larger time step $\Delta T=P\Delta t$, the
procedure described previously applied to equation Eq. (\ref{eq:38})
gives 
\begin{equation}\label{eq:39}
\lbrack\omega_{\rho}\rbrack = (\lbrack\Phi_{\alpha} - \Delta tB\rbrack^{- 1}\lbrack\Phi_{\alpha}\rbrack)^{P}\lbrack\omega_{G, - \Delta T}\rbrack
\end{equation}
However, the inverse of the matrix
$\lbrack\Phi_{\alpha} - \Delta tB\rbrack$ needs to be computed
explicitly here, even if we use a series approximation to compute the
matrix product
$(\lbrack\Phi_{\alpha} - \Delta tB\rbrack^{- 1}\left\lbrack \Phi_{\alpha} \right\rbrack)^{P}$.
 We can also rewrite Eq. (\ref{eq:38}) as 
\begin{equation*}
(I - \Delta tC)\lbrack\omega_{\rho}\rbrack = \lbrack\omega_{G}\rbrack
\end{equation*}
where $C = \lbrack\Phi_{\alpha}\rbrack^{- 1}B$. In this case we get,
\[\lbrack\omega_{\rho}\rbrack = (I - \Delta tC)^{- P}\left\lbrack \omega_{\rho, - \Delta T} \right\rbrack.\]
In this form, the system can be solved similarly to Eq. (\ref{eq:29}) and we can
use the same procedure to obtain an equation like Eq. (\ref{eq:32}). However, the
matrix inverse $\lbrack\Phi_{\alpha}\rbrack^{- 1}$ needs to be
computed explicitly, even when using a series approximation to compute  $(I - \Delta tC)^{-1}$.

\subsection{Comparison between the different
solvers}
Before embarking into a parameter scan to understand the limits of nodal
radial basis functions as an implicit advection equation solver, we
first would like to compare it to a solver using radial basis functions.
Unless otherwise indicated, the radial basis function-based solvers
(i.e. RBF and NRBF) used the Wendland function $\psi_{3,4}$, which
guarantees strictly positive definiteness in up to three dimensions. We
chose this function for its exceptional smoothness. Besides the RBF
solver, we also compared the NRBF solver to a standard implicit solver,
the second order centered implicit (CI) solver given by
\begin{equation*}
\rho_{i}^{n + 1} + \frac{u\Delta T}{12\Delta x}( - \rho_{i + 2}^{n + 1} + 8\rho_{i + 1}^{n + 1} - 8\rho_{i - 1}^{n + 1} + \rho_{i - 2}^{n + 1}) = \rho_{i}^{n},
\end{equation*}
This equation can be turned into a matrix form leading to a form similar
to Eq. (\ref{eq:29}). We also compared the method to the explicit Lax-Wendroff
(LW) discretization scheme given by
\begin{equation}\label{eq:43}
\rho_{i}^{n + 1} = \rho_{i}^{n} - \frac{\Delta T}{2\Delta x}u\left\lbrack \rho_{i + 1}^{n} - \rho_{i - 1}^{n} \right\rbrack + \frac{\Delta t^{2}}{2\Delta x^{2}}u^{2}\left\lbrack \rho_{i + 1}^{n} - 2\rho_{i}^{n} + \rho_{i - 1}^{n} \right\rbrack
\end{equation}
Both solver are notoriously non-conservative, allowing to check our conservative NRBF approach. To obtain an absolute measure of the error, we compared all numerical
solutions to a smooth solution of the advection equation Eq. (\ref{eq:16}) in one
dimension with no source, namely
\begin{equation*}
F_{\sigma}\left( x - x_{0} - ut \right) = 1 + {exp -}\left\lbrack \left( x - x_{0} - ut \right)/\sigma \right\rbrack^{2},
\end{equation*}
traveling from the left to the right. We used the value of the solution
$F_{\sigma}$ as a Dirichlet boundary condition for the
different methods, applied to three nodes to the left and right sides of
the domain. Our initial condition also used the solution
$F_{\sigma}$ with it peak set on the left boundary.

Since Eq. (\ref{eq:16}) is dimensionless, we took the velocity \emph{u} to be 1
and a domain {[}-2,2{]}. For this comparison, we discretized our domain
with 501 nodes. The peak of $F_{\sigma}$ starts at the left
boundary node and travels to the right. As written, the function peak
reaches the right boundary at t=4. The main reason for adding a constant
to the function is to let the solution to relax to 1 after the Gaussian
pulse traversed the domain, allowing to test the long-term stability of
the different solvers for non-trivial solutions. While other methods can
be used to solve this differential equation, we use here the same method
across all three implicit schemes to compare all the three implicit
solvers on the same footings.
\begin{figure}[ht]
\includegraphics[width=3.5in]{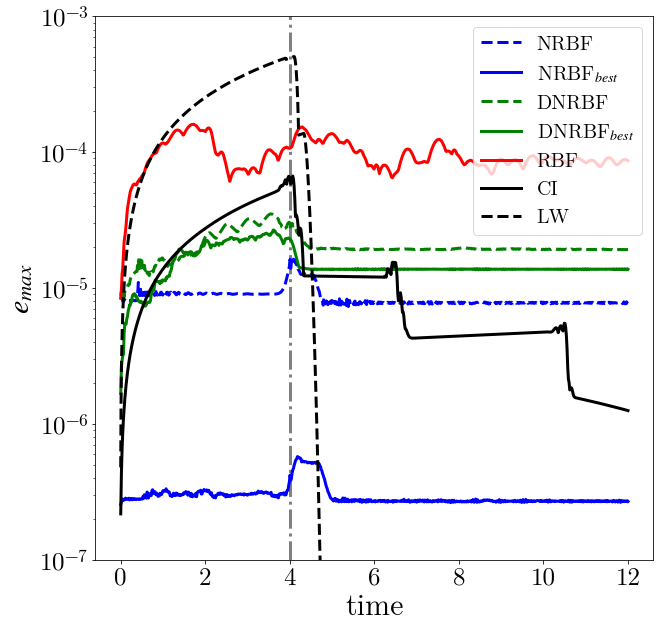}
\caption{Comparison between the error of three different implicit
methods using centered implicit finite differences (CI), radial basis
functions (RBF), nodal radial basis functions using a series
approximation for the matrix inversion (NRBF) or a direct solver
(DNRBF). The nodal NRBF methods using a large width parameter are
indicated using the subscript ``best''. The explicit Lax-Wendroff (LW)
method is shown for reference. The vertical dot-dashed gray line
corresponds to the time when the Gaussian peak reaches the right
boundary of the domain.}
\label{fig:7}
\end{figure}

Figure \ref{fig:7} shows the maximum error $e_{max}$ between the
true solution, given by Eq. (\ref{eq:43}), and the numerical solution, computed
using different solvers. As expected, the Lax-Wendroff solver (dashed
black line in Figure 7) comes last, with a linear increase of the error
(seen as a logarithmic curve in the log plot of Figure \ref{fig:7}). This solver
is known to have large numerical viscosity. It is also subject to the
CFL condition, forcing the time stepping to be much smaller than any
implicit methods, requiring 4 times as many as steps and degrading the
solution even further. The centered implicit solver (solid black line in
Figure \ref{fig:3}) does better than the explicit solver. But the error also
increases linearly (also seen as a logarithmic curve in the log plot of
Figure \ref{fig:7}) throughout the computation, a sign that both solvers are not
conservative.

The radial basis function solver (solid red in Figure \ref{fig:7}) starts with an
error similar to the Lax-Wendroff method but the error saturates rather
than increasing linearly. While numerical fluctuations are visible
throughout, they never turn unstable, keeping the error around
10\textsuperscript{-4}, even after the Gaussian has exited the domain.
This error was obtained with a width parameter 30 nodes (or 6\% the
total domain width). Larger parameters caused numerical instabilities.
The NRBF solver, using the same width parameter as the RBF solver
(dashed blue line in Figure \ref{fig:7}), performs much better, with an error
almost an order of magnitude smaller. Further, the error is virtually
constant throughout the computation, an indication that the method is
conservative, in the sense of Eq. (\ref{eq:13}). If it was not, the error would
keep increasing throughout the simulation. It is interesting to note
that the quality of the solution comes mostly from computing the inverse
of the matrix $(I-\Delta t A)^{-P}$ using a series approximation.
When the matrix inverse is computed directly (DNRBF, dashed green line
in Figure \ref{fig:7}), the error worsens noticeably. Note that both the DNRBF and
the NRBF methods give the exact same answer when using
$\psi_{3,3}$ instead of $\psi_{3,4}$,
indicating that the poor results of the direct solvers truly come from
roundoff errors in the inverse computed in Eq. (\ref{eq:39}). If we reduce the
width parameter enough to limit numerical instabilities inside the RBF
solver, then both solvers have the exact same error, independent of
the matrix inversion method.

While the maximum value of the width parameter is problem dependent for
both the nodal and standard radial basis function solvers, we cannot
avoid matrix inversions in the latter, as Eq. (\ref{eq:39}) shows. Since the
former solver uses a Cholesky decomposition, round-off errors are
negligible, allowing a width parameter twice as large. After that, the
NRBF method also becomes crippled by round-off errors. While we show
here the best case scenario, an increase in the width parameter does not
yield necessarily a better solution when the inverse of the matrix $(I-\Delta t A)^{-P}$ is computed directly
(DNRBF\textsubscript{best}, solid green line in Figure \ref{fig:7}). But there is
a clear improvement when the series approximation is used
(NRBF\textsubscript{best}, solid blue line in Figure \ref{fig:7}), with a
reduction of the error by more than an order of magnitude compared to
all other solvers.

\section{ Accuracy  of the implicit nodal radial basis function
solver}

Nodal radial basis functions can solve the linear advection equation
with greater precision compared to other standard methods. However,
these functions are defined implicitly, and it would be difficult to
determine the impact of the different parameters on the quality of the
solution. Yet, we have just seen that time stepping, the smoothness of
the radial basis functions and possibly the presence of a boundary can
affect the solution. But we are now in a position to use the `no
boundary' condition \cite{papa,grif} as an open
boundary condition at the right boundary. This condition was not used in
the previous section since the explicit finite difference scheme needed
boundary values at both boundaries and we wanted to treat all boundary
conditions on the same footing.

Now, the impact of time stepping is often an issue for computational
methods. While the method presented here is implicit and is not subject
to the CFL condition, it can only support a time stepping $\Delta T$ that
are three times the $\Delta T_{CFL}$ , the time step given by
the CFL, when using $\psi_{3,4}$ and four times the
$\Delta T_{CFL}$ when using $\psi_{3,3}$.
This limit can be increased further by using more points outside the
boundaries.

At present, it is not possible to define this limit precisely since the
nodal radial basis functions are not explicitly formed. However, we can
look in greater details at the sub-cycle timestep $\Delta t$. Figure \ref{fig:8}
shows how the sub-cycling (i.e the ratio $\Delta T$/$\Delta t$) impacts
the error for nodal radial basis functions formed by the smoothest
Wendland functions used in this paper, namely
$\psi_{3,4}$. When the sub-cycle ratio is small, the
solver is not conservative, and the error builds up linearly. In this
case, the average error of a simulation (solid line) is close to the
maximum error of that simulation (dashed line), which is also the end
error, while the minimum error (dotted line) is orders of magnitude
smaller than the average error. If we were to plot this error with time
it would behave like the error of the implicit centered method.
\begin{figure}[ht]
\includegraphics[width=3.4in]{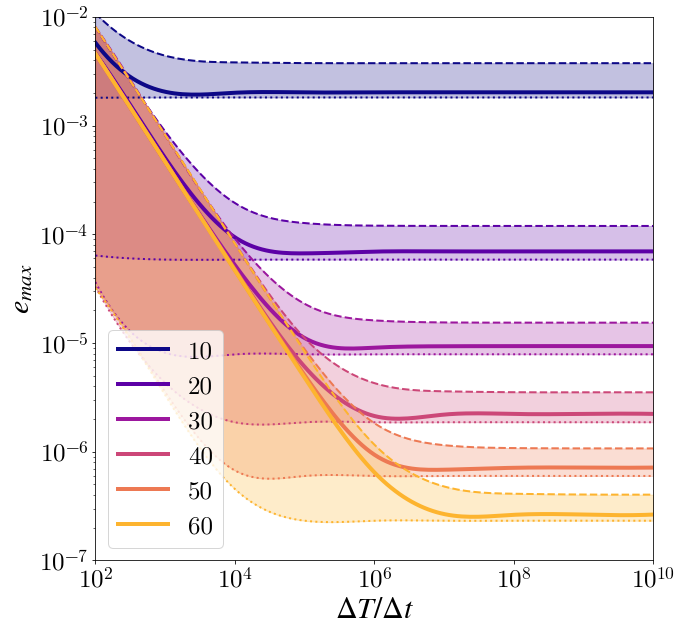}
\caption{Error for different ratios $\Delta T/\Delta t$, computed for different width
parameters, given in number of nodes. The solid lines give the average
error throughout the best NRBF simulations for different width
parameters, while the shaded region gives the error bracket, bounded by
the minimum (dotted line) and maximum (dashed line) errors. Typically,
when the average error is close to the minimum error, then the method is
conservative. In this case the maximum error comes from the bump caused
by the solution going through the boundary. When the average error is
close to the maximum error, then the solver is not conservative and lead
to a linear increase of the error, as seen in centered implicit or
Lax-Wendroff methods. The size of the width parameter is given as a
function of half the number of nodes spanned by the modal radial basis
function. The domain has a total of 501 nodes.}
\label{fig:8}
\end{figure}

While the error steadily diminishes with larger ratios, the method is
still not conservative until increasing the ratio does not yield better
error. Once the error has settled, we see that there is very little
difference between the minimum and maximum error, indicating that the
method is now conservative and the average error stays close to the
minimum error. When plotted against time in Figure \ref{fig:7}, the error would be
roughly flat throughout the simulation. For small width parameters, this
error quickly becomes insensitive to the ratio $\Delta T/\Delta t$ as it
gets dominated by interpolation error, controlled by the width
parameter. What is remarkable at this point is the fact that the method
is still conservative despite the large error. As the width parameter
becomes larger though, the kink in the error curve is pushed to higher
values of $\Delta T/\Delta t$ and the error diminishes steadily.

As we saw in Figure \ref{fig:3}, the width parameter and the smoothness greatly
change the way the function couples the neighboring nodes to the main
support node. Figure \ref{fig:9}-a shows that their impact is dramatic. When the
width parameter is small, the function smoothness does not impact the
error at all and the weak coupling between nodes (narrow stencil) yield
a relatively large error. As the width parameter becomes large and more
nodes are coupled, the error strongly depends on the function smoothness
and the width parameter. In the extreme case of
$\psi_{3,4}$, increasing the width parameter six times
reduces the error by four orders of magnitude. However, despite the
large error, the method remains conservative, showing a small error
variation throughout the simulation. We see that the error start to
saturate for even larger width parameters, since the shape of the nodal
radial basis functions becomes independent of the shape parameter. As
the coupling between nodes initially increases with large width
parameters, the number of nodes that need to be added outside of the
domain should also increase. This is necessary so boundary conditions
can be imposed with a spatial order that is consistent with the spatial
order of the method. Figure \ref{fig:9}-b shows that the error does improve with
more nodes located outside of the domain, but this change is weakly
dependent of on the number of boundary nodes.
\begin{figure}
a)\includegraphics[width=2in]{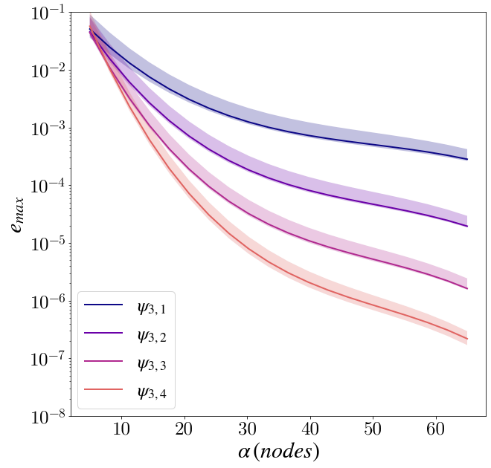}\\*
b)\includegraphics[width=2in]{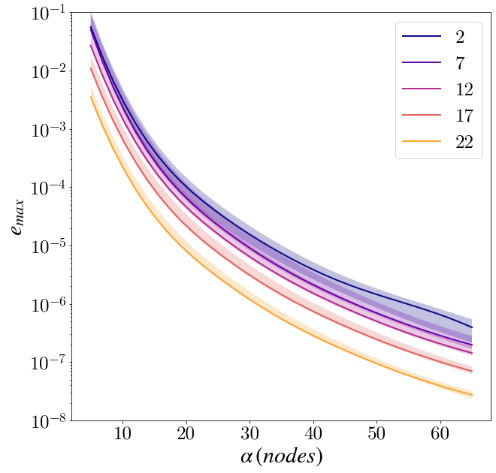}\\*
c)\includegraphics[width=2in]{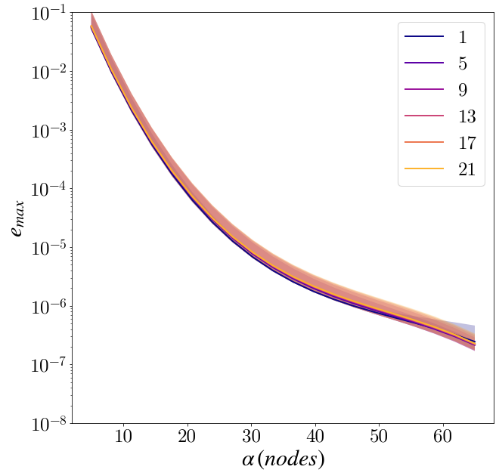}
\caption{Error for different width parameters (given as number of
nodes) for a) for the Wendland functions $\psi_{3,1}$,
$\psi_{3,2}$, $\psi_{3,3}$, and $\psi_{3,4}$, b)
a different number of boundary nodes and c) different Gaussian widths $\sigma$, defined in Eq. (\ref{eq:43}), both for the Wendland function
$\psi_{3,4}$. The solid lines give the average error throughout
the best NRBF simulations, while the shaded region gives the error
bracket. The domain has 501 nodes.}
\label{fig:9}
\end{figure}
Since we are looking at a particular solution of the partial
differential equation, which is relatively smooth, and the error is
mostly driven by the width parameter and the function smoothness, the
increase in the number of nodes does not provide a better approximation
of the solution, as shown in Figure \ref{fig:10}-a. However, numerical
instabilities start to become problematic if the width parameter is too
large, forcing the error to increase. Since we have an excellent
approximation with fewer nodes, we can generate a random grid
distribution with larger variations (up to +/-30\% from the homogeneous
node locations) to see the impact of the random distribution on the
overall quality of the solution. Here we let the simulation runs until
t=10 to verify that the solver reaches the steady state solution (1
according to Eq. (\ref{eq:43})) and stays stable throughout the simulation. While
the error increases noticeably, Figure \ref{fig:10}-b shows that the error is
bounded and remains reasonable even when nodes are displaced
substantially.
\begin{figure}[ht]
a)\includegraphics[width=2in]{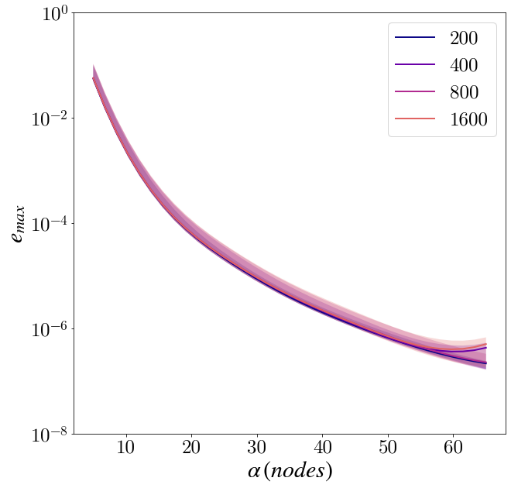}\\*
b)\includegraphics[width=2in]{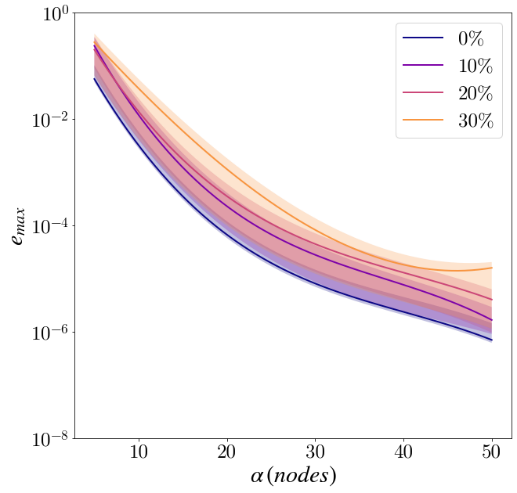}\\*
c)\includegraphics[width=2in]{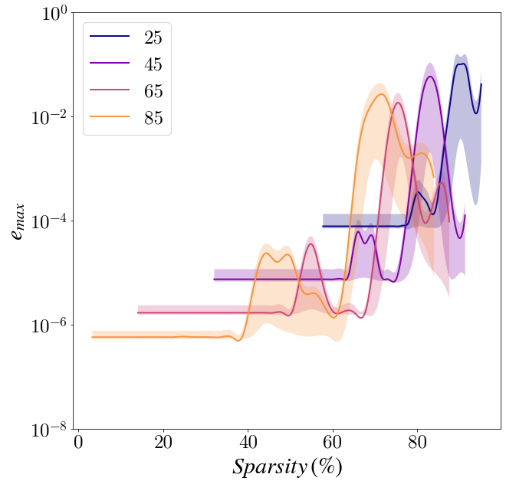}
\caption{ a) Using $\psi_{3,4}$ with 200, 400, 800 and 1600
nodes and b) with 200 nodes randomly distributed by 0\%, 10\%, 20\% and
30\% compared to the homogeneous internode distribution distance. c)
Error caused by truncation of the nodal radial basis function for
different width parameters (in number of nodes) for
$\psi_{3,3}$. The curve stops where the matrix is sparse without
truncation.}
\label{fig:10}
\end{figure}

As we discussed earlier, nodal radial basis functions can be truncated.
Figure \ref{fig:10}-c shows the nodal radial basis functions are not conservative
for extreme truncation. However, as we reduced sparsity, the method
becomes conservative and remains such until the minimum sparsity has
been reached. At this point, we simply stopped plotting the curve. Note
that the sparsity measurement is not absolute. As we saw in Figure \ref{fig:10}-a,
the precision comes from the total number of nodes contained inside the
nodal radial basis function, which is controlled by the width parameter,
rather than the total number of points contained inside the domain. When
solving real problems, the total number of points inside the domain will
increase while the precision is still dictated by the width parameter
and the function smoothness. So, the sparsity will increase drastically
for a given precision.

\section{The advection equation with non-homogeneous velocity.}

The main advantage of nodal radial basis functions over radial basis
functions is their ability to solve Eq. (\ref{eq:19}) with a variable velocity.
Limiting the discussion to the one-dimensional case, we can easily show
that the proposed implicit solver radial basis functions cannot solve
the advection equation when the velocity varies across the domain. In
this case, Eq. (\ref{eq:35}) becomes
\begin{equation}\label{eq:44}
\rho u = \sum_{j}^{}\omega_{\rho u,j}\Phi_{\alpha,x_{j}}.
\end{equation}
From this equation we turn Eq. (\ref{eq:35}) into
\begin{equation}\label{eq:45}
\forall x_{i} \in U\,\sum_{j}^{}\omega_{\rho,j}\Phi_{\alpha,x_{j}}(x_{i}) + \sum_{j}^{}\omega_{\rho u,j}\partial_{x}\Phi_{\alpha,x_{j}}(x_{i})\Delta t = \sum_{j}^{}\omega_{G,j}\Phi_{\alpha,x_{j}}(x_{i}).
\end{equation}
Unlike Eq. (\ref{eq:37}), Eq. (\ref{eq:45}) cannot be written as a linear system since
$\omega_{\rho u,j} \neq \omega_{\rho,j}u_{j}$. If we suppose the
contrary true, then we could write Eq. (\ref{eq:44}) as
\begin{equation*}
\rho u = \sum_{j}^{}{\omega_{\rho,j}u_{j}}\Phi_{\alpha,x_{j}}.
\end{equation*}

Using Eq. (\ref{eq:34}) we would get
\begin{equation*}
u\sum_{j}^{}\omega_{\rho,j}\Phi_{\alpha,x_{j}} = \sum_{j}^{}{\omega_{\rho,j}u_{j}}\Phi_{\alpha,x_{j}}.
\end{equation*}
So
\begin{equation*}
\forall x_{i} \in U,\, u\left( x_{i} \right)\sum_{j}^{}\omega_{\rho,j}\Phi_{\alpha,x_{j}}\left( x_{i} \right) = \sum_{j}^{}{\omega_{\rho,j}u_{j}}\Phi_{\alpha,x_{j}}\left( x_{i} \right),
\end{equation*}
which would give
\begin{equation*}
\forall x_{i} \in U,\,\sum_{j}\left( \omega_{\rho,j}u_{i} - \omega_{\rho,j}u_{j} \right)\Phi_{\alpha,x_{j}}\left( x_{i} \right) = 0.
\end{equation*}
Since ${\{\Phi}_{\alpha,x_{j}}\}$ are basis functions, we would get
\begin{equation*}
\forall i,j\ \,\omega_{\rho,j}u_{i} - \omega_{\rho,j}u_{j} = 0.
\end{equation*}

So, if $\omega_{\rho u,j} = \omega_{\rho,j}u_{j}$ then \emph{u} is
constant or $\rho = 0$ across the whole domain. Hence, the radial
basis function decomposition would need to use a non-linear implicit
method to solve this equation. The nodal radial basis function method
can solve this system directly, as it did in the constant velocity case.

As an example, we can solve Eq. (\ref{eq:16}) with the following velocity
distribution
\begin{equation*}
u_{\gamma,\sigma}\left( x - x_{c} \right) = 1 - \gamma{exp -}\left\lbrack \left( x - x_{c} \right)/\sigma \right\rbrack^{2},
\end{equation*}
where we took \emph{x\textsubscript{c}} to be the domain center node, $\gamma$
the damping gain and $\sigma$ the velocity distribution width. We chose $\sigma$ small
enough so the velocity at both boundaries is 1 and the discretization
can sample $u_{\gamma,\sigma}$ relatively well. We used here the
domain {[}-4,4{]} to guarantee that the velocity distribution is
virtually 1 at both boundaries. The velocity distribution used here is
shown in Figure \ref{fig:11}-a.
\begin{figure}[ht]
a)\includegraphics[width=2.2in]{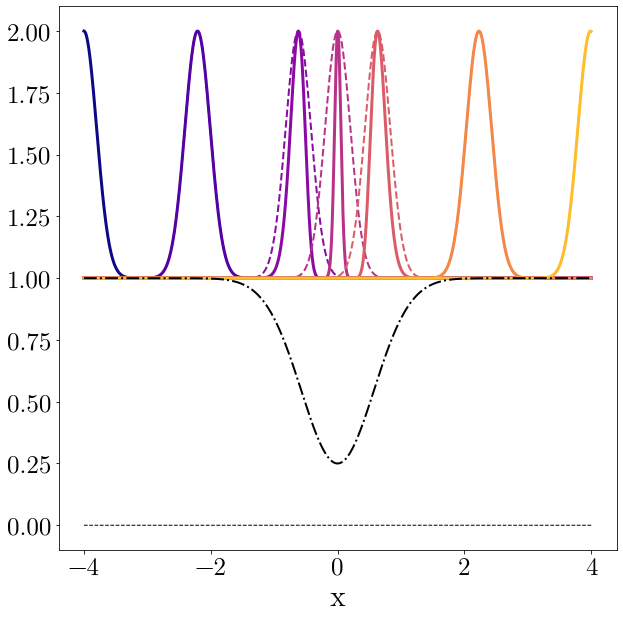}
b)\includegraphics[width=2.3in]{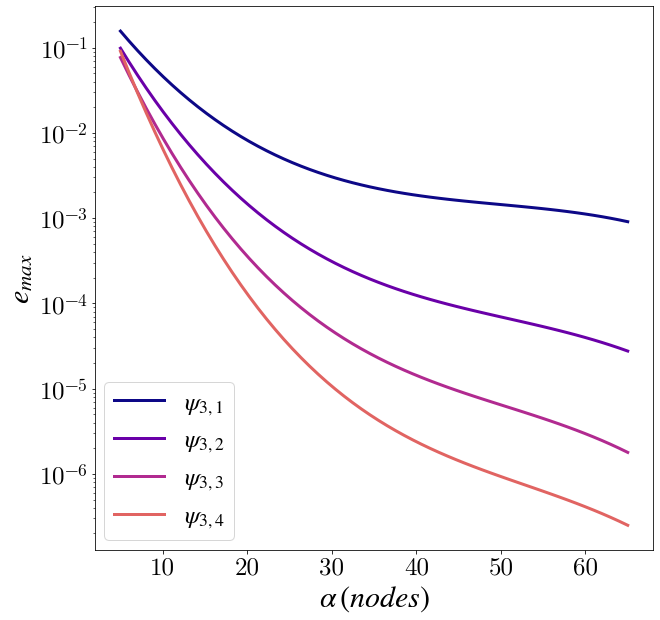}
\caption{a) The solution (solid line) to the differential equation as
a function of time from boundary to boundary, together with the Gaussian
pulse following a ballistic trajectory (dashed line) and the velocity
distribution across the domain (dot-dashed line). b) The maximum error
between the ballistic solution and the numerical solution at the right boundary for the Wendland functions $\psi_{3,1}$,
$\psi_{3,2}$, $\psi_{3,3}$, and $\psi_{3,4}$
with different width parameters (given in number of nodes). The domain has 501 nodes. The left boundary has Dirichlet's conditions. The right boundary has a `no boundary' condition.}
\label{fig:11}
\end{figure}
The solution of Eq. (\ref{eq:16}) is compressed in regions where
$u_{\gamma,\sigma}$ decreases and stretched in regions where
$u_{\gamma,\sigma}$ increases. The Gaussian pulse of Eq. (\ref{eq:43}) was used
as initial condition and as a Dirichlet condition of the left boundary
throughout the simulation. The right boundary here is open (i.e. the `no
boundary' boundary condition). Figure \ref{fig:11}-a shows the numerical solution
as it moves from the left to the right.

If our Gaussian pulse was to pass through the domain with a ballistic
trajectory, the location $x_t$ of the peak would be given
by
\begin{equation*}
x(t) - x\left( t_{0} \right) = \int_{t_{0}}^{t}{u_{\gamma,\sigma}\left( x(t') - x_{c} \right)\text{dt}}'.
\end{equation*}
Because the velocity distribution is symmetric with respect to
$x_c$, the Gaussian pulse following the ballistic
trajectory is also a solution of Eq. (\ref{eq:16}), but only in regions where
$u_{\gamma,\sigma} = 1$. The ballistic pulse traversing the domain is
also shown in Figure \ref{fig:11}-a. As we can see, it is indistinguishable from the numerical solution at the periphery of the domain.

We compared the numerical solution against the ballistic pulse at
$t_{final}$, when both peaks are located at the right
boundary. Figure \ref{fig:11}-b shows that the maximum error is comparable to the
case with constant velocity, and shown in Figure \ref{fig:9}-a.  The other errors were also similar and were not included to the paper. There is no
minimum and maximum error bracket here since the looked at the final
simulation time \emph{t\textsubscript{final}} and not at the whole time
series since we have not computed the error near
\emph{x\textsubscript{c}} with this method.

\section{Conclusion}
This paper shows how radial basis functions can be used to construct
implicitly a family of nodal radial basis functions on a discrete set
\emph{U} of nodes, which are interpolant of the translated impulse
function $\delta(x - x_{j})$. Unlike radial basis functions, which are
translate and scaled version of a single modal function, the nodal
radial basis functions depend on the node distribution. These functions
form an orthonormal basis on $\overline{U}$, the space of interpolant
operating on \emph{U}, leading to a simplified expression of the solver
obtained when discretizing the linear advection equation. This solver
can be extended trivially to the case where the velocity varies across
the whole domain.

One advantage of the nodal radial basis function method over the radial
basis function method is easily imposing boundary conditions. In
general, boundary conditions are given in term of the solution
(Dirichlet) or its derivatives (Neumann). Yet, the radial basis function
solver computes the solution in term of weights rather than the actual
solution values, using Eq. (\ref{eq:37}). So, at every time step, the solution
has to be computed at the domain nodes using Eq. (\ref{eq:34}), the boundary
conditions have to be applied, and then, the new weights have to be
computed using Eq. (\ref{eq:3}).

One possible issue with nodal radial basis functions comes from its
computational complexity. The Cholesky decomposition is
O(N\textsuperscript{3}), followed by N computations of the nodal radial
basis function derivatives, each O(N\textsuperscript{2}), to form the
matrix \emph{A}. So, we face another computation complexity that is
O(N\textsuperscript{3}), which is not present when using radial basis
functions. Each time step is O(N\textsuperscript{2}) after that. When
the time evolution of the equation requires a number of time steps that
is larger than N, the nodal radial basis function becomes more
advantageous. Indeed, we would need to go back and forth between the
actual value of the solution and its weight to impose boundary
conditions using radial basis functions, a transformation requiring
O(N\textsuperscript{2}) operations.

The O(N\textsuperscript{3}) dependence is problematic compared to the
centered implicit and Lax-Wendroff methods. But there are also simple
remedies to this ailment. Global nodal radial basis functions, as the
one used in this paper, can be truncated easily, leading to a
computational complexity that is O(N) where one node is connected to a
limited set of neighboring nodes, as opposed to all the nodes in the set
leading to an O(N\textsuperscript{2}) dependance. We could have also
used a partition of unity approach \cite{babu}, similar to the one used
with radial basis functions \cite{yoko}, also leading to an O(N) scaling.

While the present work mostly used global nodal radial
basis functions, it showed that truncating global nodal radial basis
functions can lead to sparse matrix algebra. It can be extended
relatively easily to partition-of-unity
methods \cite{grieb,yoko10}, to reduce the computational complexity from
O(N\textsuperscript{2}) to a O(N). This work was done in relatively
ideal conditions, staying away from solutions with sharp gradients
naturally arising from hyperbolic PDEs, which we plan to investigate
further using adaptive techniques. While adaptive meshing is not easy to implement, it clearly does not conflict with the methods presented herein and will be explored in future works.

%%-----------------------------
%% your bibliography
%%-----------------------------
\section*{Acknowledgements}

This research was supported in part by the NSF Awards PHY-1725178 and PHY-1943939.
\bibliographystyle{acm}
\bibliography{biblio}
\end{document}